\documentclass[12pt]{amsart}
\usepackage{bbm}
\usepackage{mathrsfs}
\parskip4pt plus2pt minus2pt

\usepackage[english]{babel}
\usepackage{wrapfig}

\usepackage[top=3cm,bottom=2cm,left=3cm,right=3cm,marginparwidth=1.75cm]{geometry}

\usepackage{amsmath,amssymb}
\usepackage{graphicx}
\usepackage{cite}
\usepackage{enumitem} 
\usepackage[all,cmtip]{xy}
\usepackage{tikz-cd}
\usetikzlibrary{intersections}
\usepackage{relsize}
\usepackage{faktor}
\usepackage{dsfont}
\usepackage{xcolor, import}
\usepackage{bm}
\usepackage{hyperref}
\hypersetup{
 citebordercolor={green!40!black},
 linkbordercolor={red!50!black},
 urlbordercolor={blue!70!black}
}

\makeatletter
\renewcommand{\section}{\@startsection%
{section}
{1}
{0mm}
{1.5\bigskipamount}
{0.5\bigskipamount}
{\centering\normalsize\sc}}

\renewcommand{\subsection}{\@startsection%
{subsection}
{2}
{0mm}
{0.5\bigskipamount}
{0.5mm}
{\normalsize\sc}}

\renewcommand{\paragraph}{\@startsection%
{paragraph}
{4}
{0mm}
{\bigskipamount}
{0pt}
{\normalsize\bf}}


\expandafter\let\expandafter\oldproof\csname\string\proof\endcsname
\let\oldendproof\endproof
\renewenvironment{proof}[1][\proofname]{%
  \oldproof[\slshape #1]%
}{\oldendproof}
\def\provedboxcontents#1{$\square$}


\newtheoremstyle{thm}{6pt plus 1pt minus 1pt}{6pt plus 1pt minus 1pt}{\slshape}{}{\scshape}{.}{5pt plus 1pt minus 1pt}{}
\newtheoremstyle{def}{6pt plus 1pt minus 1pt}{6pt plus 1pt minus 1pt}{}{}{\scshape}{.}{5pt plus 1pt minus 1pt}{}
\newtheoremstyle{rmk}{6pt plus 1pt minus 1pt}{6pt plus 1pt minus 1pt}{}{}{\scshape}{.}{5pt plus 1pt minus 1pt}{}
\newtheoremstyle{claim}{6pt plus 1pt minus 1pt}{6pt plus 1pt minus 1pt}{}{}{\slshape}{.}{5pt plus 1pt minus 1pt}{}

\theoremstyle{thm}
\newtheorem{newstatement}{newstatement}
\newtheorem{lemma}[newstatement]{Lemma}
\newtheorem{theorem}[newstatement]{Theorem}
\newtheorem*{theorem*}{Theorem 2}
\newtheorem{corollary}[newstatement]{Corollary}
\newtheorem{proposition}[newstatement]{Proposition}

\newtheorem*{reeb-thm}{Reeb's Sphere Theorem}

\theoremstyle{def}
\newtheorem{definition}[newstatement]{Definition}
\newtheorem{question}[newstatement]{Question}

\theoremstyle{rmk}
\newtheorem{remark}[newstatement]{Remark}
\newtheorem{example}[newstatement]{Example}

\theoremstyle{claim}


\newcommand{\supl}[2]{A_{#2}(#1)}
\newcommand{\subl}[2]{B_{#2}(#1)}

\let\grad\nabla

\renewcommand{\epsilon}{\varepsilon}
\renewcommand{\phi}{\varphi}

\renewcommand{\:}{\,{:}\;}
\DeclareMathOperator{\st}{st}

\usepackage{mathtools} 
\mathtoolsset{showonlyrefs,showmanualtags}
\numberwithin{equation}{section}


\newcommand{\R}{\mathbb{R}}
\newcommand{\C}{\mathbb{C}}

\newcommand{\Z}{\mathbb{Z}}

\newcommand{\Bd}{\partial}
\DeclareMathOperator{\Int}{Int}

\newcommand{\be}{\begin{equation}}
\newcommand{\ee}{\end{equation}}

\let\emph\textsl
\frenchspacing

\title{On smooth functions with two critical values}

\author{Antonio Lerario}
\address{SISSA, Via Bonomea 265, 34136 Trieste, Italy}
\email{lerario@sissa.it}

\author{Chiara Meroni}
\address{Max Planck Institute for Mathematics in the Sciences, Inselstrasse 22, 04103 Leipzig, Germany}
\email{chiara.meroni@mis.mpg.de}

\author{Daniele Zuddas}
\address{Dipartimento di Matematica e Geoscienze, Università di Trieste, Via Valerio 12/1, 34127 Trieste, Italy.}
\email{dzuddas@units.it}

\begin{document}

\begin{abstract}
We prove that every smooth closed manifold admits a smooth real-valued function with only two critical values. We call a function of this type a \emph{Reeb function}. We prove that for a Reeb function we can prescribe the set of minima (or maxima), as soon as this set is a PL subcomplex of the manifold. In analogy with Reeb's Sphere Theorem, we use such functions to study the topology of the underlying manifold. In dimension $3$, we give a characterization of manifolds having a Heegaard splitting of genus $g$ in terms of the existence of certain Reeb functions. Similar results are proved in dimension $n\geq 5$.
\end{abstract}

\keywords{Critical level; smooth regular neighborhood; PL subcomplex.}

\subjclass[2020]{Primary 57R70; Secondary 57R12, 57Q40}

\maketitle

\section{Introduction}\label{sec:intro}

In 1946 Reeb \cite{Reeb46} proved the following result.

\begin{reeb-thm}
Let $M$ be a smooth closed manifold of dimension $n$. Suppose that there is a smooth function $f \: M \to \R$ with only two critical points, which are also non-degenerate. Then $M$ is homeomorphic to the $n$-sphere.
\end{reeb-thm}

Subsequently, Milnor \cite{Milnor64} improved the theorem by relaxing the non-degeneracy assumption for the two critical points.

In the present article we study smooth functions on closed manifolds having two critical values (instead of two critical points), which are then the minimum and the maximum, respectively. Such functions (which are defined below, see Definition \ref{Reeb_funct}) will be called \textsl{Reeb functions}. Moreover, we will be able to keep some control on the critical subset of a Reeb function. Some examples of Reeb functions are provided at the beginning of Section \ref{sec:Reeb}.

It is a natural question to understand to what extent the knowledge of the set $X_0$ of minima and $X_1$ of maxima of a Reeb function $f\: M\to \R$ determines the manifold $M$ (at least up to homeomorphisms). This problem is already interesting when $X_0, X_1$ are smooth submanifolds of $M$ and, to our knowledge, it does not seem to have been investigated in the literature. In order to address this question, in this paper, we will consider Reeb functions whose critical sets $X_0$ and $X_1$ are finite subcomplexes of $M$. Notice that this requirement imposes restrictions only on the structure of the critical set, and not on the behaviour of the function near it (i.e., we allow $f$ to assume the minimum and the maximum values with arbitrary order of degeneracy).

To start with, we prove that every smooth closed manifold $M$ admits a Reeb function, even with a prescribed set of minima (or maxima).

\begin{theorem}\label{thm:existence}
Let $M$ be a smooth closed manifold, and let $X \subsetneq M$ be a finite simplicial subcomplex.
Then there exists a $C^{\infty}$ Reeb function $f\: M \to [0,1]$, having $0$ and $1$ as extrema, such that $f^{-1}(0)= X$.
\end{theorem}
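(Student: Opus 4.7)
The plan is to split $M$ as $N \cup C \cup N^*$, where $N$ is a smooth regular neighborhood of $X$, $N^*$ is a smooth regular neighborhood of a PL spine $X^*$ of $Y := M \setminus \Int N$, and $C := Y \setminus \Int N^*$ is a product collar joining $\partial N$ to $\partial N^*$. I will construct $f$ so that $f^{-1}(0) = X$, $f^{-1}(1) = X^*$, $f \equiv 1/3$ on $\partial N$, $f \equiv 2/3$ on $\partial N^*$, and $f$ has no critical points on $M \setminus (X \cup X^*)$. For the setup I equip $M$ (via Whitehead's theorem) with a smooth triangulation in which $X$ is a subcomplex, and take $N$ to be the associated smooth second-derived regular neighborhood of $X$. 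Since $X \neq M$, $Y$ is a compact manifold with non-empty boundary $\partial Y = \partial N$, so by classical PL topology it collapses onto a PL subcomplex $X^* \subset \Int Y$ of dimension at most $n-1$; I then take a smooth regular neighborhood $N^*$ of $X^*$ inside $Y$, arranged so that $C \cong \partial N \times [0,1]$ is a genuine collar.

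\textbf{Key lemma.} The technical core is the following claim, which I would prove separately: for any smooth regular neighborhood $P$ of a PL subcomplex $Z$ in a smooth manifold there exists a smooth function $\rho \colon P \to [0,1]$ with $\rho^{-1}(0) = Z$, $\rho^{-1}(1) = \partial P$, no critical points on $P \setminus Z$, and satisfying $\rho(p,s) = 1-s$ in some collar $\partial P \times [0,\epsilon)$ of $\partial P$ (with $s=0$ on $\partial P$). I would prove this by induction on $\dim Z$: the $0$-skeleton is handled by $\rho(x) = \|x\|^2$ in disjoint coordinate balls; to extend across a new $k$-simplex $\sigma$, I use the mapping-cylinder structure of $P$ over $\sigma$ to define a candidate radial coordinate on that piece, and patch it to the previously constructed $\rho$ near $\partial \sigma$ with bump functions, arranged so that the gradient of $\rho$ stays nonzero throughout the transition region.

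\textbf{Assembly.} Applying the lemma to $(N,X)$ and $(N^*,X^*)$ produces smooth functions $\rho_0$ and $\rho_1$. Define
\begin{equation*}
f(p) = \begin{cases} \tfrac{1}{3}\rho_0(p) & \text{if } p \in N, \\ \tfrac{1}{3} + \tfrac{1}{3}\tau(p) & \text{if } p \in C, \\ 1 - \tfrac{1}{3}\rho_1(p) & \text{if } p \in N^*, \end{cases}
\end{equation*}
where $\tau \colon C \to [0,1]$ is the collar parameter with $\tau = 0$ on $\partial N$ and $\tau = 1$ on $\partial N^*$. The three formulas agree on $\partial N$ and on $\partial N^*$. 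On a two-sided tubular neighborhood $\partial N \times (-\epsilon,\epsilon) \subset M$, with $N$-side at $u \leq 0$ and $C$-side at $u \geq 0$, the linear-in-collar property gives $f = (1+u)/3$ on both sides, hence smooth; the analogous computation at $\partial N^*$ gives smoothness there. By construction the only critical values of $f$ are $0$, attained exactly on $X$, and $1$, attained exactly on $X^*$.

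\textbf{Main obstacle.} The hard step is the key lemma. For a smooth submanifold $Z$ the squared distance in a tubular neighborhood furnishes $\rho$ immediately, but a PL subcomplex is stratified and lacks a global normal-bundle structure: the regular neighborhood has corner-like behaviour along the non-manifold strata of $Z$. The inductive construction must patch the mapping-cylinder radial functions on adjacent simplices of $Z$ without creating spurious critical points in the regions where those pieces meet along lower-dimensional faces, and I expect careful control of the bump functions (together with convexity of $\rho$ near $Z$) to be essential.
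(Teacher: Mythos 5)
Your global strategy coincides with the paper's: take a smooth regular neighborhood $N$ of $X$, a spine $X^*$ of the complement $Y=M\setminus\Int N$ with its own regular neighborhood, build on each piece a function vanishing exactly on the subcomplex and with no other critical points, and glue along the boundaries (your collar $C$ and the linear-in-collar normalization are just a variant of the paper's gluing lemma, Lemma \ref{remark:glue}). The decomposition and the assembly step are fine.

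The gap is your ``key lemma'', which is precisely the technical heart of the theorem (the paper's Lemma \ref{lemma:inter}), and your proposal does not prove it: you sketch an induction over the skeleta of $Z$ using mapping-cylinder radial coordinates patched by bump functions, and then concede in your last paragraph that the genuinely hard points --- smoothness along the non-manifold strata of $Z$ and absence of spurious critical points in the transition regions --- are deferred to ``careful control'' that is never carried out. Two concrete problems: (i) the radial coordinate coming from the PL mapping-cylinder structure of a regular neighborhood is only piecewise linear (Lipschitz) near $Z$, not smooth, so it cannot be used as is, and producing any function that is $C^\infty$ across $Z$, has $Z$ exactly as its zero set, and allows arbitrary degeneracy there is a nontrivial analytic issue, not a routine smoothing; (ii) bump-function interpolation between the radial functions of adjacent simplices can easily create critical points near where simplices of different dimensions meet, and nothing in the sketch rules this out. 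The paper resolves both difficulties simultaneously by a different mechanism: Hirsch's smooth regular neighborhood theory gives a diffeomorphism $\varphi\colon \partial U\times (0,1]\to U\setminus X$, so the candidate function is a reparametrized projection and automatically has no critical points off $X$; smoothness at $X$ --- where the jets of $\varphi^{-1}$ may blow up because $X$ is not a smooth submanifold --- is then achieved by composing with an arbitrarily flat function $\gamma$ (Lemma \ref{lemma:smooth}) whose flatness is calibrated, via Fa\'a di Bruno estimates, against the growth of those jets, so that all derivatives of $\gamma\circ\pi_2\circ\varphi^{-1}$ tend to zero at $X$ and the extension by $0$ is $C^\infty$. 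Without an argument of this quantitative kind (or a genuine substitute), your key lemma, and hence the theorem, is not established.
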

The homology of maxima and minima of a Reeb function satisfy some ``duality'' relation, see Remark \ref{remark:homology}. For example, if $M=S^n$ then the homology of $X_0$ is isomorphic to the homology of $X_1$, with reversed indices. Moreover, as one could expect, $M\setminus X_1$ deformation retracts to $X_0$ and viceversa, see Lemma \ref{lemma:h}.

We can restate Reeb's Sphere Theorem using the language of Reeb functions  as follows: a smooth closed manifold of dimension $n$ admits a smooth Reeb function with two points as extrema, if and only if it is homeomorphic to an $n$-sphere. What happens if the set of minima and maxima are subcomplexes? In dimension $n=3$ we prove the following result.

\begin{theorem}\label{thm_g_infty}
Let $M$ be a smooth closed connected $3$-manifold. Then the following conditions are equivalent:
\begin{enumerate}
\item there exists a Reeb function $f \colon M \to [0,1]$  having connected 1-dimensional subcomplexes of the same Euler characteristic $\chi$ as extrema, for some $\chi\leq 1$;
\item the Heegaard genus of $M$ is at most $1-\chi$ in the orientable case and $2-2\chi$ in the non-orientable case.
\end{enumerate} 
\end{theorem}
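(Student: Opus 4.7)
The plan is to translate between Reeb functions with $1$-dimensional extrema and Heegaard splittings of $M$ by looking at the regular level surfaces of the Reeb function. For $(1) \Rightarrow (2)$ I will read a Heegaard splitting off the Reeb function, and for $(2) \Rightarrow (1)$ I will build a Reeb function out of a prescribed Heegaard splitting after stabilization.

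For $(1) \Rightarrow (2)$, let $f$ be such a Reeb function with $X_0 = f^{-1}(0)$ and $X_1 = f^{-1}(1)$. I would first show that for sufficiently small $\epsilon > 0$ the sublevel set $N_0 := f^{-1}([0,\epsilon])$ is a closed smooth regular neighborhood of $X_0$ in $M$, and symmetrically that $N_1 := f^{-1}([1-\epsilon, 1])$ is a regular neighborhood of $X_1$; this should follow from the structure of $f$ near its critical set coming out of the proof of Theorem~\ref{thm:existence}, together with Lemma~\ref{lemma:h}. Since $f$ has no critical value in $(\epsilon, 1-\epsilon)$, the middle slab $f^{-1}([\epsilon, 1-\epsilon])$ is diffeomorphic to $\Sigma \times [0,1]$ via the gradient flow, where $\Sigma := f^{-1}(\epsilon)$ is a closed surface. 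Thus $M = N_0 \cup_{\Sigma} N_1$, a Heegaard-type decomposition. Using the three-dimensional identity $\chi(\Sigma) = 2\chi(N_i) = 2\chi(X_i) = 2\chi$, I would then identify $\Sigma$: when $M$ is orientable each $N_i$ is an orientable handlebody with free $\pi_1$ of rank $1-\chi$ and $\Sigma$ is orientable of genus $1-\chi$; when $M$ is non-orientable, the regular neighborhood of at least one spine $X_i$ must be attached to a ball using an orientation-reversing $1$-handle, so an Euler-characteristic and parity argument forces $\Sigma$ to be non-orientable of crosscap number $2 - 2\chi$.

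For $(2) \Rightarrow (1)$, starting from a Heegaard splitting of $M$ of the stated type, I would first stabilize (adding trivial $1$-handles) so that the splitting has genus exactly $1-\chi$ in the orientable case, resp.\ crosscap number exactly $2-2\chi$ in the non-orientable case. Write $M = V_0 \cup_{\Sigma} V_1$ and choose connected graph spines $X_i \subset V_i$ with $\chi(X_i) = \chi$. Next I would build a smooth function $\phi_0\colon V_0 \to [0, 1/2]$ with $\phi_0^{-1}(0) = X_0$, $\phi_0^{-1}(1/2) = \Sigma$, no critical points off $X_0$, and prescribed linear behaviour in a bicollar neighborhood of $\Sigma$; this can be arranged by a relative version of the construction used in Theorem~\ref{thm:existence} applied to the compression body $V_0$ with spine $X_0$. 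Symmetrically construct $\phi_1 \colon V_1 \to [1/2, 1]$, and glue along the matching collar on $\Sigma$ to produce a $C^\infty$ Reeb function $f \colon M \to [0,1]$ with $f^{-1}(0) = X_0$ and $f^{-1}(1) = X_1$.

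I expect the two main obstacles to be: (i) in the non-orientable case of $(1) \Rightarrow (2)$, correctly tracking orientability so as to force the splitting surface to be non-orientable of crosscap number exactly $2-2\chi$ rather than, a priori, orientable of genus $1-\chi$ (the Euler-characteristic count alone does not distinguish these); and (ii) in $(2) \Rightarrow (1)$, the construction of the collared functions $\phi_0, \phi_1$ on the compression bodies with spine as minimum set and $\Sigma$ as maximum level, together with their smooth gluing across $\Sigma$ without introducing critical points there. This second point is essentially a relative analogue of Theorem~\ref{thm:existence} for manifolds with boundary, and will be the main technical work of the proof.
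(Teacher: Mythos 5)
Your direction $(2)\Rightarrow(1)$ is essentially the paper's argument: after stabilizing to a splitting of genus exactly $1-\chi$ (resp.\ $2-2\chi$), one takes connected graph spines of the two handlebodies, and the desired function comes from Lemma \ref{lemma:inter} applied to each handlebody (which is a smooth regular neighborhood of its spine, by uniqueness of regular neighborhoods), glued along the splitting surface via Lemma \ref{remark:glue}. So the ``relative construction'' you single out as the main technical work is not new work at all; it is exactly the mechanism already used in the proof of Theorem \ref{thm:existence}.

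The genuine gap is in $(1)\Rightarrow(2)$, at the step where you assert that $N_0=f^{-1}([0,\epsilon])$ is a smooth regular neighborhood of $X_0$ (hence a handlebody with spine $X_0$). The Reeb function in (1) is arbitrary, so you cannot appeal to ``the structure of $f$ near its critical set coming out of the proof of Theorem \ref{thm:existence}'': there is no local model near $X_0$, since $f$ may be arbitrarily degenerate there. What Lemma \ref{lemma:h} actually provides is only that $W=B_\epsilon(f)\setminus \Int(T)$ is an h-cobordism between $\partial T$ and $f^{-1}(\epsilon)$, where $T$ is some regular neighborhood of $X_0$ inside the sublevel. In dimension $3$ an h-cobordism is not automatically a product --- this is precisely the delicate point --- so an extra input is required: the paper invokes \cite[Theorem 10.2]{Hem76} to get $W\cong \partial T\times[0,1]$, together with the observation that $B_\epsilon(f)$ embeds in a handlebody by flowing along $-\grad f$, hence contains no fake $3$-cells, so that Hempel's theorem applies without the Poincar\'e Conjecture. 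Only then is $B_\epsilon(f)=T\cup W$ a genus-$g$ handlebody (and likewise $A_\epsilon(f)$), after which your Euler characteristic count and the orientability bookkeeping (if $M$ is non-orientable, both sides must be non-orientable handlebodies, since gluing two orientable handlebodies along their full boundary yields an orientable manifold) go through. Without this h-cobordism-to-product step, the claim that $M=N_0\cup_\Sigma N_1$ is a Heegaard splitting is unjustified.
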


Results in the same spirit as of Theorem \ref{thm_g_infty} can be  proved in higher dimensions using cobordism theory, see Corollary \ref{cor:main_higher_dimension}.

\begin{remark}
The Reeb graph of a Reeb function with connected extrema is the connected graph with two vertices and one edge (that is, an interval), see Saeki \cite{S2022}. Therein, smooth functions with given Reeb graph on a given closed manifold have been constructed under a certain compatibility condition expressed in terms of fundamental groups, and the critical set turns out to be an embedded smooth manifold of codimension zero whose connected components correspond to the vertices of the graph.
\end{remark}

\section{Reeb's functions}\label{sec:Reeb}
In the following, smooth means $C^\infty$, and diffeomorphisms between smooth manifolds are $C^\infty$ unless otherwise stated.

\begin{definition}\label{Reeb_funct}
Let $M$ be a smooth closed manifold and let $f\: M \to [a,b]$ be a $C^k$ function, with $k\geq 2$ and $a<b$. We say that $f$ is a \emph{Reeb function} on $M$ if $f$ has $a$ and $b$ as the only critical values (the \textsl{extrema} of $f$). The subsets $f^{-1}(a)$ and $f^{-1}(b)$ are the \emph{critical levels} of $f$.\end{definition}

Without loss of generality, we assume throughout the paper that $a=0$ and $b=1$.
The simplest example of a Reeb function is a function on $S^n$ with only two critical points. Below we give some other examples.
\begin{example}Let $M$ be a smooth manifold and $g\: S^n\to [0,1]$ be a smooth function with only two critical points $p_0, p_1\in S^n$. Then the function $f\: M \times S^n\to [0,1]$ defined by $f(x, y) =g(y)$ is a Reeb function with $X_0=M \times \{p_0\}$ and $X_1=M \times \{p_1\}.$
\end{example}

\begin{example}Let $\mathbb{K}\!=\!\R, \C, \mathbb{H}$ and denote by $\mathbb{K}\mathrm{P}^n$ the $n$-dimensional projective space  over $\mathbb{K}$ with homogeneous coordinates $[x_0, \ldots, x_n]$. Let $X_0\!=\!\{x_i=0 \mid i\leq k\}\cong \mathbb{K}\mathrm{P}^{n-k-1}$ and $X_1=\{x_i=0 \mid i> k\}\cong \mathbb{K}\mathrm{P}^k$. The function $f\: \mathbb{K}\mathrm{P}^n\to [0,1]$ defined by
$$ f(x)=\frac{|x_0|^2+\cdots +|x_k|^2}{|x_0|^2+\cdots +|x_n|^2}$$
is a Reeb function (in fact, also a Morse--Bott function).
\end{example}

\begin{example}
Let  $M\subset \R^n$ be a smooth semialgebraic set and  $X\subset M$ be a basic semialgebraic set (relative to $M$). There is an explicit way to define a Reeb function $f\: M\to \R$ with $f^{-1}(0)=X$. In fact, assuming that
\begin{equation}
    X = \{x \in \R^n \,|\, g_1(x)\geq 0, \ldots , g_s(x)\geq 0\}\cap M
\end{equation}
where $g_i \in \R[x_1,\ldots ,x_n]$ for every $i=1,\ldots,s$, the function $f$ can be constructed as follows. First consider the functions $f_i\: \R^n \to \R$ such that
\begin{equation}
    f_i (x) = 
    \begin{cases}
    0 & g_i(x) \geq 0, \\
    e^{\frac{1}{g_i(x)}} & g_i(x) < 0.
    \end{cases}
\end{equation}
Then set $f= \sum_{i=1}^s f_i|_{M}$. This example is also connected to the theory developed by Durfee  \cite{durfee:nhoods} and Dutertre \cite{dutertre:semial_nhoods}.
\end{example}

\begin{example}
If $f:M\to [0,1]$ is a Reeb function, then for every integer $n>0$ the function $f^n$ is also Reeb, with the same critical set of $f$ but with a higher order of degeneracy at zero. More generally, if $g:I\to I$ is a smooth map such that $g'(t)\neq 0$ for all $t\in (0,1)$, then $g\circ f:M\to [0,1]$ is still a Reeb function.
\end{example}

Throughout the paper, we make use of the following notation. Given a function $f \: M \to \R$ we set
\begin{align*}
   \supl{f}{\epsilon} &= \{x\in M \mid f(x)\geq \epsilon \} && \text{(the $\epsilon$-superlevel of $f$)}\\
   \subl{f}{\epsilon} &= \{x\in M \mid f(x)\leq \epsilon \} && \text{(the $\epsilon$-sublevel of $f$).}
\end{align*}

We recall the following well-known fact, which we state without proof (the reader may refer to Lee \cite{lee:smooth_man} for a proof).

\begin{lemma}\label{remark:glue}
Let $M_0$, $M_1$ be two smooth $m$-manifolds with non-empty boundary, and suppose that there is a diffeomorphism $\phi \: \Bd M_0 \to \Bd M_1$ between their boundaries. Let $f_0 \: M_0 \to [a,b]$ and $f_1 \: M_1 \to [b,c]$ be $C^k$ functions, $1\leq k \leq \infty$,
such that $f_0^{-1}(b) = \Bd M_0$ and $f_1^{-1}(b) = \Bd M_1$. Furthermore, we assume that $\grad f_0$ is non-vanishing and pointing outwards along $\Bd M_0$, and $\grad f_1$ is non-vanishing and pointing inwards along $\Bd M_1$, with respect to certain Riemannian metrics on $M_0$ and $M_1$. Then, the manifolds $M_0$ and $M_1$, as well as the functions $f_0$ and $f_1$, can be glued together by $\phi$ yielding a smooth manifold $M= M_0\cup_\phi M_1$ and a $C^k$ function $f = f_0 \cup_\phi f_1 \: M \to [a,c]$. Moreover, this construction is unique up to $C^k$ diffeomorphisms of $M$.
\end{lemma}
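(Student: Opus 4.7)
The plan is to use the non-vanishing of $\grad f_0$ and $\grad f_1$ along the boundaries to construct collar neighbourhoods in which the two functions become linear in the collar parameter, and then to perform the gluing through these collars so that the resulting function is automatically $C^k$ across the identified boundary. As a first step, near $\Bd M_0$ the vector field $V_0 := -\grad f_0/\|\grad f_0\|^2$ is of class $C^{k-1}$, points inward along $\Bd M_0$, and satisfies $V_0(f_0) \equiv -1$. Integrating $V_0$ from $\Bd M_0$ produces a collar $\Phi_0\:\Bd M_0\times [0,\epsilon)\to M_0$ for which, by construction, $f_0(\Phi_0(p,s)) = b-s$. Applying the same procedure to $V_1 := \grad f_1/\|\grad f_1\|^2$ yields a collar $\Phi_1\:\Bd M_1\times [0,\epsilon)\to M_1$ with $f_1(\Phi_1(q,s)) = b+s$.

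Next I would form $M = M_0 \sqcup M_1/(p\sim\phi(p))$ as a topological space and introduce a bicollar chart $\Psi\:\Bd M_0\times (-\epsilon,\epsilon)\to M$ by $\Psi(p,s) = \Phi_0(p,-s)$ for $s\leq 0$ and $\Psi(p,s) = \Phi_1(\phi(p),s)$ for $s\geq 0$; the two branches agree on the slice $s=0$ thanks to the gluing relation. The smooth atlas on $M$ consists of the atlases on $\Int M_0$ and $\Int M_1$, together with charts on the image of $\Psi$ obtained by composing $\Psi$ with local charts of $\Bd M_0$. Transition functions to the interior charts are the flows $\Phi_0, \Phi_1$ themselves, so compatibility is automatic, and in the bicollar chart the glued function reads $f(\Psi(p,s)) = b+s$, which is smooth. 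On the interiors $f$ coincides with $f_0$ or $f_1$, so $f$ is globally $C^k$ on $M$.

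The main technical point that I expect to be delicate is the regularity of the smooth structure on $M$: the flows used above are only $C^{k-1}$, one degree less than $f_0$ and $f_1$, while the statement requires uniqueness up to $C^k$ diffeomorphism. This is handled by a standard smoothing argument, composing $\Psi$ with a $C^\infty$ collar of $\Bd M_0$ inside each $M_i$ (which exists irrespective of $f_0, f_1$), upgrading the atlas to $C^\infty$ while preserving the $C^k$ regularity of $f$. Uniqueness up to $C^k$ diffeomorphism of $M$ then follows from the standard isotopy-uniqueness of collar neighbourhoods, applied to the construction above on both sides.
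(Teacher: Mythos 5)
The paper states this lemma without proof (it defers to Lee), so there is no argument of record to compare against; your normalized-gradient-collar construction is the standard route, and in the case $k=\infty$ --- the only case the paper actually uses, e.g.\ in the proof of Theorem \ref{thm:existence} --- it is essentially complete: there the collars $\Phi_0,\Phi_1$ are genuinely smooth, the bicollar chart $\Psi$ is smoothly compatible with the interior charts, and $f(\Psi(p,s))=b+s$ exhibits $f$ as smooth across the seam.

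For finite $k$, however, the final step of your argument has a genuine gap. First, for $k=1$ the field $V_0=-\grad f_0/\|\grad f_0\|^2$ is merely continuous, so its integral curves need not be unique and the collar $\Phi_0$ need not exist at all. Second, for $2\le k<\infty$ your collars are only $C^{k-1}$, and the proposed repair --- composing with a $C^\infty$ collar of the boundary inside each $M_i$ --- does not justify ``preserving the $C^k$ regularity of $f$'': in an arbitrary smooth collar the function is no longer linear in the collar parameter, and whether its normal derivatives up to order $k$ match across the seam is precisely the issue the flow collar was introduced to resolve; since the change from the flow collar to a smooth collar is only a $C^{k-1}$ change of coordinates, this argument yields only that $f$ is $C^{k-1}$ on the smoothed manifold. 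A clean fix is to build the collar adapted to $f_0$ without integrating the gradient: fix any smooth collar $c_0\:\partial M_0\times[0,\delta)\to M_0$, observe that $g(p,s)=f_0(c_0(p,s))$ is $C^k$ with $g(p,0)=b$ and $\partial_s g(p,0)<0$ (the gradient is nonzero and transverse to the boundary), and solve $g(p,s)=t$ for $s$ by the implicit function theorem to obtain a $C^k$ collar $\Phi_0$ with $f_0\circ\Phi_0(p,t)=t$. This works for every $k\ge 1$, loses no derivatives, and makes your bicollar atlas $C^k$-compatible with the interior charts; choosing a $C^\infty$ structure compatible with this $C^k$ atlas then keeps $f$ of class $C^k$, and uniqueness up to $C^k$ diffeomorphism follows from uniqueness of collars, as you indicate.
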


For the proof of Theorem \ref{thm:existence} we will need the following two technical lemmas.

\begin{lemma}[Existence of arbitrarily flat functions]\label{lemma:smooth}Let $\{c_k\}_{k\in \mathbb{N}}$ be a sequence of positive real numbers. There exists a smooth function $\gamma\: [0, 1]\to \R$ such that $\gamma(0)=0$, $\gamma(1)>0$, $\gamma'(t)>0$ for $t\in (0,1]$ and for every $k\in \mathbb{N}$ 
\be\sum_{j=0}^k|\gamma^{(j)}(t)|\leq c_k\quad \forall t\leq \frac{1}{k}.\ee
\end{lemma}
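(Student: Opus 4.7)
The plan is to construct $\gamma$ as a locally finite sum $\gamma = \sum_{n\geq 1}\phi_n$ of rescaled smooth step functions, one per dyadic scale, where the weights are chosen to decay fast enough to dominate the growth of the derivatives at each scale.

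Fix a smooth step $p\:\R\to[0,1]$ with $p(s)=0$ for $s\leq 0$, $p(s)=1$ for $s\geq 1$, $p'(s)>0$ on $(0,1)$, and let $P_j=\sup_s|p^{(j)}(s)|$. For each integer $n\geq 1$ set
$$\phi_n(t) = \alpha_n\, p\bigl(2^{n-1}(t - 2^{-n})\bigr),$$
where the weights $\alpha_n>0$ are to be chosen. Each $\phi_n$ is a smooth step from $0$ to $\alpha_n$ whose transition window is $I_n := (2^{-n},\,3\cdot 2^{-n})$. The $I_n$ cover $(0, 3/2)$ and each $I_n$ meets only its two neighbours $I_{n\pm 1}$, so at every $t$ only finitely many $\phi_n$'s contribute non-trivially to $\gamma$ or any of its derivatives. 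Consequently, once $\sum_n\alpha_n<\infty$, the function $\gamma$ is $C^\infty$ on $(0,1]$, with $\gamma(0)=0$, $\gamma(1)=\sum_n\alpha_n>0$, and $\gamma'(t)>0$ on $(0,1]$, because at every such $t$ at least one $\phi_n$ is transitioning and no $\phi_n$ has negative derivative.

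The quantitative estimate is the heart of the argument. The key inequality is $|\phi_n^{(j)}(t)|\leq \alpha_n\, 2^{(n-1)j} P_j$ on $I_n$, with the $j$-th derivative vanishing outside $I_n$ for $j\geq 1$. If $t\leq 1/k$, then $t\in I_n$ forces $n>\log_2 k$, hence the bound $\sum_{j=0}^k|\gamma^{(j)}(t)|\leq c_k$ reduces, scale by scale, to a finite family of linear constraints of the form
$$\alpha_n\cdot(k+1)\cdot 2^{(n-1)k} P_k \,\leq\, c_k/4,\qquad 1\leq k<2^n,$$
together with a tail condition $\sum_{m\geq \lceil\log_2 k\rceil}\alpha_m\leq c_k/4$ and $\sum_n\alpha_n\leq c_0$. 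These can all be met by a recursive choice of $\alpha_n$ sufficiently small at each $n$; the resulting fast decay will moreover force all derivatives of $\gamma$ to vanish as $t\to 0^+$, so that $\gamma$ extends to a $C^\infty$ function on the closed interval $[0,1]$ with vanishing derivatives at the origin.

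The main obstacle is this coupled bookkeeping. For each fixed scale $n$ one must satisfy a finite but growing-in-$n$ family of linear constraints on the single number $\alpha_n$, and the resulting sequence must in addition be summable and compatible with smoothness at $0$. Once one verifies that a single sequence $\{\alpha_n\}$ can be chosen to fulfill all these conditions simultaneously, and that the series and its term-by-term derivatives really converge to a $C^\infty$ function on $[0,1]$, the lemma follows.
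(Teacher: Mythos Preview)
Your approach is correct and is essentially the paper's own: both build $\gamma$ as a (locally finite) sum of rescaled smooth step functions supported on a shrinking sequence of intervals accumulating at $0$, with the weights chosen small enough scale-by-scale to force the required derivative bounds; the paper uses the harmonic intervals $[\tfrac{1}{k+1},\tfrac{1}{k}]$ and obtains $\gamma$ by first assembling $\gamma''$ and integrating twice, whereas you use dyadic windows and sum the steps directly, but these are implementation details rather than a different idea. One small slip to fix when you write things out: in your displayed constraint the factor $P_k$ should be replaced by $\max_{0\le j\le k}P_j$ (or $\sum_{j\le k}P_j$), since all derivatives of $p$ up to order $k$ enter the estimate, not just the $k$-th.
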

\begin{proof}
Without loss of generality, we assume that the sequence $\{c_k\}_{k\in \mathbb{N}}$ is monotone decreasing with $c_0<1$.
Consider the smooth function $\varphi\: [0, 1]\to [0,1]$ defined by:
\be \varphi(x)=\frac{e^{\frac{-1}{x}}}{e^{\frac{-1}{x}}+e^{\frac{-1}{1-x}}}.\ee
This is a monotone function on $[0,1]$ with $\varphi^{(k)}(0)=0=\varphi^{(k)}(1)$ for all $k\geq 0.$ For every $k\geq1$ let us define the function $\varphi_k\: [\frac{1}{k+1}, \frac{1}{k}]\to \R$ by
\be \varphi_k(x)=a_k\varphi(k(k-1)x-k),\ee
where the sequence $\{a_k>0\}_{k\geq1}$ is chosen in such a way that
\be \sup_{t\in [\frac{1}{k+1},\frac{1}{k}]}\sum_{j=0}^{k-2}|\varphi_k^{(j)}(t)|\leq \frac{c_{k}}{3}.\ee
By replacing $a_k$ with a possibly smaller positive number, we can assume that:
\begin{enumerate}\item $\{a_k\}_{k\geq 1}$ is monotone decreasing and that $\sum_{k\geq 1}a_k=A<1.$ 
\item for every $k\geq 1$ we have $\sum_{j\geq k+1}a_j\leq \frac{c_k}{3}$;
\item denoting by $\sigma=(\sum_{j}\frac{1}{j^4})^{1/2}$, for every $k\geq 1$ we have $(\sum_{j\geq k}a_j^2)^{1/2}\leq \frac{c_k}{3\sigma }.$
\end{enumerate} 
Pick now the sequence $\{b_k\}_{k\geq 1}$ defined by $b_k=b_{k-1}-a_k$ and $b_1=1-a_1$, and define $f\: [0, 1]\to \R$ to be the function
\be f(t)=\left(\sum_{k\geq 1}(\varphi_k(t)+b_k)\chi_{[\frac{1}{k+1}, \frac{1}{k}]}(t)\right)-(1-A).\ee
This function is smooth, since each $\varphi_k$ is flat at its domain of definition and at $t=\frac{1}{k+1}$ we have $\varphi_k(\frac{1}{k+1})+b_k=\varphi_{k+1}(\frac{1}{k+1})$. Moreover, the function $f$ satisfies $f(0)=0$ and $f(1)=b_1-(1-A)=A-a_1>0$.
Finally, we define $\gamma$ by:
\be \gamma(t)=\int_{0}^t\!\!\int_{0}^s f(x)\, dx\, ds.\ee
Then, $\gamma(0)=0$ and $\gamma(1)>0$. Moreover for every $t\in (0,1)$
\be \gamma'(t)=\int_{0}^t f(x)\, dx>0,\ee
since the integrand is non-negative and non-zero on any interval $(0, t)$.

For $t\leq \frac{1}{k}$ we have the following estimate for $|\gamma(t)|$:
\begin{equation}\label{eq1}
\begin{split}
|\gamma(t)| &\leq \int_{0}^{\frac{1}{k}}\!\!\!\int_{0}^{\frac{1}{k}}|f(x)|\, dx\, ds\leq\frac{1}{k^2}  \sup_{t\leq \frac{1}{k}} |f(t)| =\frac{1}{k^2}\! \sup_{\frac{1}{k+1}\leq t\leq \frac{1}{k}} \! |f(t)|\\
&= \frac{1}{k^2}-1+A+(1-a_1-\cdots-a_k) = A-a_1-\cdots-a_k\\
&= \sum_{j\geq k+1} a_j\leq \frac{c_k}{3}.
\end{split}
\end{equation}
Similarly, for $t\leq \frac{1}{k}$ we have the following estimate for the derivative $|\gamma'(t)|$:
\begin{align}
\label{eq2}|\gamma'(t)|&\leq\int_{0}^{\frac{1}{k}}|f(x)|\, dx\leq \sum_{j\geq k}\frac{a_j}{j^2}\leq\bigg( \sum_{j\geq k}a_j^2\bigg)^{\!\frac{1}{2}} \bigg( \sum_{j\geq k}\frac{1}{j^4}\bigg)^{\!\frac{1}{2}}\leq\frac{c_k}{3}.
\end{align}
Moreover, for $t\leq \frac{1}{k}$ we have the following estimate for $\sum_{j=2}^k|\gamma^{(j)}(t)|$:
\begin{equation}\label{eq3}
\begin{split}
\sum_{j=2}^k |\gamma^{(j)}(t)| &= \sum_{j=0}^{k-2}|f^{(j)}(t)|
\leq\sup_{i\geq k}\sup_{t\in [\frac{1}{i+1}, \frac{1}{i}]}\sum_{j=0}^{k-2}|\varphi_i^{(j)}(t)|\\
&= \sup_{t\in [\frac{1}{k+1}, \frac{1}{k}]}\sum_{j=0}^{k-2}|\varphi_k^{(j)}(t)|\leq \frac{c_k}{3}.
\end{split}
\end{equation}
Combining \eqref{eq1}, \eqref{eq2} and \eqref{eq3} we get:
\[\pushQED{\qed}
\sum_{j=0}^k|\gamma^{(j)}(t)|\leq c_k\quad \hbox{ for } t\leq \frac{1}{k}.\qedhere\popQED\]
\renewcommand{\qedsymbol}{}
\end{proof}

\begin{lemma}\label{lemma:inter}Let $M$ be a smooth manifold, and let $X \subsetneq \Int(M)$ be a finite simplicial subcomplex. Let also $U$ be a compact, smooth regular neighborhood of $X$.
Then there exists a $C^{\infty}$ function $\alpha\: U \to [0,1]$ such that:
\begin{enumerate} 
\item $\alpha^{-1}(0)= X$ and $\alpha^{-1}(1)=\partial U$;
\item $\alpha$ has no critical points in $(0,1)$;
\item the sublevels $\subl{\alpha}{\epsilon}$ are smooth regular neighborhoods of $X$ for every $\epsilon\in (0,1]$.
\end{enumerate}
\end{lemma}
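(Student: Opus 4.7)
The plan is to build $\alpha$ in two stages. First I will produce a continuous function $\beta\:U\to[0,1]$ that realizes $X$ as its zero set and $\partial U$ as its $1$-set, whose positive sublevels are all smooth regular neighborhoods of $X$ and which is smooth on $U\setminus X$ with no critical values in $(0,1)$. The function $\beta$ will generally \emph{fail} to be smooth at $X$ — its derivatives can blow up as $x\to X$. To fix this, I will then reparametrize in the range by an arbitrarily flat function $\gamma$ from Lemma~\ref{lemma:smooth} and set $\alpha:=\gamma\circ\beta$; choosing the constants $c_k$ in Lemma~\ref{lemma:smooth} small enough relative to the derivative growth of $\beta$ will guarantee that $\alpha$ is $C^{\infty}$ and flat at $X$.

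For the first stage I fix a smooth regular neighborhood $V\subset\Int U$ of $X$ strictly smaller than $U$, a smooth collar $\partial U\times[0,1]\hookrightarrow U\setminus\Int V$, and, by iterating the smooth regular neighborhood theorem, a nested family $\{U_t\}_{t\in(0,1]}$ of smooth regular neighborhoods of $X$ with $U_1=U$, $U_t\subset\Int U_s$ for $t<s$, $\bigcap_t U_t=X$, and $\bigcup_{t\in(0,1]}\partial U_t=U\setminus X$, depending smoothly on $t$ away from $X$ (this can be implemented either by pushing a fixed collar of $\partial V$ inward along a smooth vector field transverse to the boundaries of derived neighborhoods of a smooth triangulation adapted to $X$, or by using a smooth retraction $U\to X$ coming from the regular neighborhood structure). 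Then $\beta(x):=t$ for $x\in\partial U_t$ and $\beta(x):=0$ for $x\in X$ has exactly the properties listed above.

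For the second stage, fix an auxiliary Riemannian metric on $U$ and, for each $k\geq 0$, let
\begin{equation}
D_k:=\sup_{\beta(x)\in[1/(k+1),\,1]}\sum_{j=0}^{k}\|D^{j}\beta(x)\|,
\end{equation}
which is finite since the supremum is taken over a compact subset of $U\setminus X$ on which $\beta$ is smooth. Pick a positive sequence $c_k$ decreasing to $0$ so fast that every polynomial combination $P_k(c_0,\dots,c_k;D_0,\dots,D_k)$ arising from Faà di Bruno's formula for the $k$-th derivative of a composition $\gamma\circ\beta$ is $\leq 1/k$; for instance $c_k=(1+D_k)^{-k-1}$ works. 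Feed this sequence into Lemma~\ref{lemma:smooth} to obtain a smooth $\gamma\:[0,1]\to\R$ with $\gamma(0)=0$, $\gamma'(t)>0$ on $(0,1]$, $\sum_{j\leq k}|\gamma^{(j)}(t)|\leq c_k$ for $t\leq 1/k$, and, after linear rescaling, $\gamma(1)=1$. Setting $\alpha:=\gamma\circ\beta$, the chain rule then shows $\|D^k\alpha(x)\|\to 0$ as $x\to X$ for every $k$, so $\alpha$ extends $C^{\infty}$-flatly across $X$.

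The verification of the three listed properties is then immediate: $\alpha^{-1}(0)=\beta^{-1}(0)=X$ and $\alpha^{-1}(1)=\beta^{-1}(1)=\partial U$ because $\gamma$ is a strictly monotone bijection $[0,1]\to[0,1]$; $\grad\alpha=\gamma'(\beta)\grad\beta$ is nowhere zero in the open preimage $\beta^{-1}(0,1)$ since $\gamma'>0$ there and $\beta$ has no critical points; and $B_{\epsilon}(\alpha)=\beta^{-1}\bigl([0,\gamma^{-1}(\epsilon)]\bigr)=U_{\gamma^{-1}(\epsilon)}$ is a smooth regular neighborhood of $X$ by construction. The main obstacle is the first stage: carefully producing the smooth family $\{U_t\}$ and the associated smooth $\beta$ requires invoking the smooth regular neighborhood theorem and its uniqueness up to ambient isotopy, so that nested regular neighborhoods can be assembled into a genuine smooth foliation of $U\setminus X$. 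Once $\beta$ is in hand, Lemma~\ref{lemma:smooth} is tailor-made to absorb any blow-up of derivatives at $X$.
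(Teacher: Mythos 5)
Your second stage is essentially the paper's own argument: compose a ``radial'' function $\beta$ with an arbitrarily flat $\gamma$ from Lemma \ref{lemma:smooth}, choosing the constants $c_k$ against the derivative bounds of $\beta$ on $\beta^{-1}([\tfrac{1}{k+1},1])$ and controlling the composition by Fa\'a di Bruno; apart from the minor point that your sample choice $c_k=(1+D_k)^{-k-1}$ ignores the combinatorial constants in Fa\'a di Bruno's formula (harmless, since one may always shrink $c_k$), this half is correct and matches the paper.

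The genuine gap is your first stage, which is where the actual content of the lemma lies. You assert the existence of a nested family $\{U_t\}_{t\in(0,1]}$ of smooth regular neighborhoods whose boundaries smoothly foliate $U\setminus X$, together with a smooth, critical-point-free transverse coordinate $\beta$, ``by iterating the smooth regular neighborhood theorem.'' But $X$ is a simplicial complex, not a smooth submanifold: there is no tubular neighborhood theorem, in general no smooth retraction $U\to X$, and uniqueness of regular neighborhoods up to ambient isotopy produces isotopies between any two given neighborhoods one at a time --- it does not assemble countably many shrinking neighborhoods into a smooth product structure $\partial U_1\times(0,1]\cong U\setminus X$ accumulating on $X$ with a smooth nonvanishing coordinate. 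This is exactly what the paper has to work for: it takes the PL function that is affine on the simplices of a triangulation in which $\st(X,M)$ is a regular neighborhood, the associated Lipschitz vector field of Hirsch \cite{hirsch:combinatorial}, and the fundamental system of smooth neighborhoods $\{U_n\}$ of \cite{hirsch:regular} whose smooth boundaries $X_n$ are transverse to that field; since the Lipschitz field cannot be smoothed globally near $X$ while keeping transversality all the way down, it is approximated by a smooth field separately on each annulus $U_n\setminus\Int(U_{n+1})$, whose flow gives diffeomorphisms with $X_n\times[\tfrac{1}{n+1},\tfrac1n]$, and these are glued into $\varphi\: X_1\times(0,1]\to U_1\setminus X$ with $\beta=\pi_2\circ\varphi^{-1}$. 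Your two proposed implementations either presuppose this construction (a single smooth vector field transverse to all the nested boundaries down to $X$) or invoke an object that need not exist (a smooth retraction onto the complex $X$), so as written the proposal assumes the hard half of the lemma rather than proving it.
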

\begin{proof}

There is a triangulation of $M$ such that the star $\st(X, M)$ of $X$ is a regular neighborhood of $X$ in $M$ (see Rourke and Sanderson \cite{rourke:PL}). We set $\widetilde U = \st(X, M)$.
Let us also consider the function $\widetilde{f} \: \widetilde{U} \to [0,2]$ which is affine-linear on every simplex and which takes value zero on $X$ and value $2$ on $\partial \widetilde{U}$. Observe that this function is piecewise smooth, and hence Lipschitz. Let $\widetilde{v}$ be the Lipschitz vector field on $\widetilde{U}$ constructed as in  \cite[Lemma $2.3$]{hirsch:combinatorial}. It follows from \cite{hirsch:regular} that there exists a fundamental system of neighborhoods $\{U_n\}_{n\geq 1}$ of $X$ in $M$ such that
\begin{enumerate}
\item  $U_1\subset \widetilde{U}$ and $U_{n+1}\subset \Int(U_n)$ for every $n\geq 1$;
\item $X_n=\partial U_n$ is a smooth embedded submanifold of $M$ of codimension one;
\item the vector field $\widetilde{v}$ is transversal to $X_n$ for every $n\geq 1$;
\item $U_n\setminus \Int(U_{n+1})$ is PL diffeomorphic to $X_n\times [\frac{1}{n+1}, \frac{1}{n}]$.
\end{enumerate}
For every $n\geq 1$ let now $v_n$ be a smooth vector field on $U_n\setminus \Int(U_{n+1})$ which is sufficiently close to $\widetilde{v}$ in order to guarantee that the flow lines of $v_n$ give a diffeomorphism 
\be \varphi_n\: X_n\times \left[\frac{1}{n+1}, \frac{1}{n}\right]\to U_n\setminus \Int(U_{n+1}).\ee
All these diffeomorphisms can be glued together to give a diffeomorphism
\be \varphi\: X_1\times (0, 1]\to U_1\setminus X.\ee 

Let $\pi_2$ be the projection of $X_1\times (0,1]$ on the second factor. 
Notice that, by the uniqueness up to smooth isotopies of regular neighborhoods, we can assume $U=U_1$ and define the map $\alpha\: U \to [0,1]$ such that
\begin{equation}
    \begin{cases}
    \alpha\big\vert_{U\setminus X} = \gamma \circ \pi_2 \circ \varphi^{-1} \\
    \alpha\big\vert_{X} \equiv 0,
    \end{cases}
\end{equation}
where $\gamma \: [0,1] \to [0,1]$ is an appropriate smooth function that we will now construct. Such function will be monotone, with nonzero derivative on $(0, 1]$ and with derivatives that go to zero sufficiently fast at zero, in such a way that $\alpha $ will be smooth and with no critical points in $U$ other than the points in $X$.

For the construction of $\gamma$, denote $\beta=\pi_2\circ \varphi^{-1}$ and fix fiberwise norms on the jet bundles $J^k(U, \R)$ and $J^k(\R, \R)$. We denote both these norms by $\|\cdot \|$. Denoting by $j_p^kf\in J_p(X, M)$ the $k$-th jet of a smooth map $f\: X\to M$, we claim that  for every $k\geq0$ there exists $p(k)>0$ such that every $p\in U\setminus X$ and for every smooth function $\gamma\: [0,1]\to \R$, we have
\be \label{eq:bound1} \|j_p^k(\gamma \circ \beta)\|\leq C(k)\cdot \|j_{\beta(p)}^k\gamma\|\cdot \|j_p^k\beta\|^k.\ee
Denoting by $(x_1, \ldots, x_n)$ coordinates on $U_1$, Fa\'a di Bruno's formula reads:
\be \frac{\partial^k(\gamma\circ \beta)}{\partial x_1\cdots \partial x_{k}}(x(p))=\sum_{\pi\in \mathcal{P}_k}\gamma^{(|\pi|)}(\beta(x))\cdot \prod_{b\in \pi}\frac{\partial^{|b|}\beta}{\partial x_{i_1}\cdots \partial x_{i_{b}}}(x(p)),\ee
where $\mathcal{P}_k$ denotes the set of all partitions $\pi$ of the set $\{1, \ldots, k\}$ and ``$b\in \pi$'' means that the variable $b$ runs through the list of all of the blocks of the partition $\pi$.
In particular, from this formula we deduce that:
\begin{align}\|j_p^k(\gamma \circ \beta)\|&\leq C_1(k)\left(\sup_{j=0, \ldots, k}|\gamma^{(j)}(\beta(p))|\right)\sum_{\pi\in \mathcal{P}_k} \prod_{b\in \pi}\left|\frac{\partial^{|b|}\beta}{\partial x_{i_1}\cdots \partial x_{i_{b}}}(x(p))\right|\\
&\leq  C_2(k)\left(\sup_{j=0, \ldots, k}|\gamma^{(j)}(\beta(p))|\right)\sum_{\pi\in \mathcal{P}_k} \|j_p^{k}\beta\|^{|b|}\\
&\leq C_3(k)\|j_{\beta(p)}^k\gamma\|\cdot (\#\mathcal{P}_k)\cdot \|j_p^k\beta\|^k\\
&\leq C(k)\cdot \|j_{\beta(p)}^k\gamma\|\cdot \|j_p^k\beta\|^k,\end{align}
which is \eqref{eq:bound1}. For every $k\geq 0$ set now
\be \tilde{c}_k:=C(k)\sup_{p\in \varphi(X_1\times [\frac{1}{k+1}, 1])}\|j_p^{k}\beta\|^k.\ee
From Lemma \ref{lemma:smooth} (applied with the choice $\{c_k=(\tilde{c}_kk)^{-1}\}$) we get a smooth function $\tilde{\gamma}\: [0,1]\to \R$ such that $\tilde{\gamma}(0)=0$, $\tilde{\gamma}(1)>0$, $\tilde{\gamma}'(t)>0$ on $(0, 1]$ and $\sum_{j\leq k}|\tilde{\gamma}^{(j)}(t)|\leq \frac{1}{k\tilde{c}_k}$ for all $t\leq \frac{1}{k}.$ 

We define our function as $\gamma:=\tilde{\gamma}/\tilde{\gamma}(1),$ so that $\gamma(1)=1$. We claim that for every $\ell\geq 0$ and for every $\epsilon>0$ there exists $m\geq 0$ such that $\|j_p^\ell (\gamma \circ \beta)\|\leq \epsilon$ for all $p\in \beta^{-1}([0, \frac{1}{m}])$. This means that for every $\ell\geq 0$ the $\ell$-th jet of $\gamma\circ\beta$ goes uniformly to zero as $p$ approaches $X$. Hence, the extension of $\gamma\circ\beta$ to zero on $X$ is smooth on $U_1$.

In order to prove the claim, let $\ell\geq0$, $\epsilon>0$ and pick $m\geq0$ such that $\frac{1}{m}\leq \epsilon\tilde{\gamma}(1)$. Let $p=\varphi(x, t)$ be a point with $t=\beta(p)\leq \frac{1}{m}$ and choose $k\geq m$ such that $\frac{1}{k+1}\leq t\leq \frac{1}{k}$. Then:
\begin{align} \|j_{p}^\ell(\gamma \circ \beta)\|&\leq \|j_{p}^{k}(\gamma \circ \beta)\|\\
&\leq C(k)\cdot \|j_{\beta(p)}^k\gamma\|\cdot \|j_p^k\beta\|^k\\
&=\frac{C(k)}{\tilde{\gamma}(1)}\|j_{\beta(p)}^k\tilde{\gamma}\|\cdot \|j_p^k\beta\|^k\\
&=\frac{C(k)}{\tilde{\gamma}(1)}\left(\sum_{j=0}^k|\tilde{\gamma}^{(j)}(t(p))|\right)\cdot \|j_p^k\beta\|^k\\
&\leq \frac{C(k)}{\tilde{\gamma}(1)}\frac{1}{k\tilde{c}_k}\cdot \|j_p^k\beta\|^k\\
&\leq \frac{1}{k\tilde{\gamma}(1)}\leq\frac{1}{m\tilde{\gamma}(1)}\leq \epsilon,
\end{align}
which is what we wanted. Since $\tilde{\gamma}'(t)>0$ on $(0, 1]$, the same is true for $\gamma'(t)$ and therefore $\alpha$ has no critical points other than the points in $X$.
\end{proof}

\begin{proof}[Proof of Theorem \ref{thm:existence}]
Without loss of generality we may assume that $M$ is connected, since we can construct the Reeb function independently on each connected component. Let $\alpha\: U\to [0,1]$ be given by Lemma \ref{lemma:inter}, where $U$ is a smooth regular neighborhood of $X$ in $M$.
Let now $V=M\setminus \Int(U)$ and consider a spine $Y$ for $V$. Using Lemma \ref{lemma:inter} we get a smooth regular neighborhood $V'$ of $Y$ in $M$ and a function $\alpha'\: V'\to [0,1]$ with the set of minima $Y$; using the uniqueness of regular neighborhoods, $V$ is diffeomorphic to $V'$ and hence the function $\alpha'$ can be defined on $V$ itself.  Set $\beta=1-\alpha'$. We can glue the functions $\alpha $ and $\beta$ along the common boundary $\partial U = \partial V$, using Lemma \ref{remark:glue}. The result is a Reeb function $f$ on $M$ with extrema $X$ and $Y$, as desired.
\end{proof}

\subsection{Relation between maxima and minima} 
\begin{lemma}\label{lemma:h}
Let $M$ be a smooth, connected, compact manifold and let $f\: M\to [0,1]$ be a Reeb function such that $X_0=f^{-1}(0)$ is a finite simplicial subcomplex. For every $\epsilon\in (0,1)$ there exists  a smooth regular neighborhood $T$ of $X_0$ in $M$ such that $T\subset \Int(B_\epsilon(f))$ and  $W=B_\epsilon(f)\setminus \Int(T)$ is an h-cobordism between $\partial T$ and $f^{-1}(\epsilon)=\partial B_\epsilon(f).$ In particular  $M\setminus f^{-1}(1)$ deformation retracts to $X_0$.
\end{lemma}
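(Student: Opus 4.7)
The plan is to construct $T$ via Lemma~\ref{lemma:inter}, to establish the h-cobordism condition by a homotopy pushout argument comparing $W$ with the open manifold $B_\epsilon(f)\setminus X_0$, and then to derive the deformation retract of $M\setminus f^{-1}(1)$ onto $X_0$ by piecing together the retractions of the $\epsilon$-sublevels for $\epsilon\nearrow 1$. For the construction, I apply Lemma~\ref{lemma:inter} to the subcomplex $X_0\subsetneq M$: since $X_0\subset \Int(B_\epsilon(f))$ and smooth regular neighborhoods of $X_0$ form a fundamental system, the input regular neighborhood in Lemma~\ref{lemma:inter} can be chosen inside $\Int(B_\epsilon(f))$, yielding a function $\alpha\: U\to[0,1]$ with $\alpha^{-1}(0)=X_0$, $\alpha^{-1}(1)=\partial U$, no critical points in $(0,1)$, and with every sublevel $B_\delta(\alpha)$ a smooth regular neighborhood of $X_0$. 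Setting $T=B_{1/2}(\alpha)$ yields the required regular neighborhood with $T\subset\Int(U)\subset\Int(B_\epsilon(f))$, and $W=B_\epsilon(f)\setminus\Int(T)$ is then a smooth compact cobordism with boundary $\partial T\sqcup f^{-1}(\epsilon)$.

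The core of the argument is the h-cobordism condition. Since $\epsilon<1$, $\grad f$ is non-vanishing on $B_\epsilon(f)\setminus X_0$, and by Lemma~\ref{lemma:inter} the same holds for $\grad\alpha$ on $U\setminus X_0$; the normalized gradient flows produce diffeomorphisms
\[
B_\epsilon(f)\setminus X_0 \,\cong\, f^{-1}(\epsilon)\times (0,1], \qquad T\setminus X_0 \,\cong\, \partial T\times (0,1],
\]
exhibiting $f^{-1}(\epsilon)\hookrightarrow B_\epsilon(f)\setminus X_0$ and $\partial T\hookrightarrow T\setminus X_0$ as strong deformation retracts. The decomposition $B_\epsilon(f)\setminus X_0 = W\cup(T\setminus X_0)$ with intersection the cofibration $\partial T$ is a homotopy pushout; the gluing lemma for homotopy pushouts, together with the homotopy equivalence $\partial T\hookrightarrow T\setminus X_0$, forces $W\hookrightarrow B_\epsilon(f)\setminus X_0$ to be a homotopy equivalence, and then the 2-out-of-3 property applied to the factorization $f^{-1}(\epsilon)\hookrightarrow W\hookrightarrow B_\epsilon(f)\setminus X_0$ gives that $f^{-1}(\epsilon)\hookrightarrow W$ is one as well. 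The symmetric application of the gluing lemma reduces the remaining inclusion $\partial T\hookrightarrow W$ being a homotopy equivalence to the claim that $T\setminus X_0\hookrightarrow B_\epsilon(f)\setminus X_0$ is one. To establish this, my plan is to refine the choice of $T$: by shrinking $T$ along the $\alpha$-flow I aim to arrange that under the identification $B_\epsilon(f)\setminus X_0\cong f^{-1}(\epsilon)\times(0,1]$ the subset $T\setminus X_0$ sits as a graph-like region $\{(y,s)\:s\leq h(y)\}$ for a smooth positive function $h$, which is automatically a deformation retract of the ambient product. An alternative route is to invoke Poincaré--Lefschetz duality (possibly with twisted coefficients) to deduce $H_\ast(W,\partial T)=0$ from $H_\ast(W, f^{-1}(\epsilon))=0$, combined with a van~Kampen computation of $\pi_1(W)$ and Whitehead's theorem.

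Once $W$ is established as an h-cobordism, the collar neighborhood of $\partial T$ in $W$ together with homotopy extension gives $\partial T$ as a strong deformation retract of $W$, and concatenating with the mapping-cylinder collapse $T\to X_0$ coming from the regular neighborhood structure yields a strong deformation retract of $B_\epsilon(f)=T\cup W$ onto $X_0$ for every $\epsilon\in(0,1)$. Taking $\epsilon_n\nearrow 1$ and piecing the retracts together via homotopy extension for CW pairs then gives the desired deformation retract of $M\setminus f^{-1}(1)=\bigcup_n B_{\epsilon_n}(f)$ onto $X_0$. The hardest part is verifying that $\partial T\hookrightarrow W$ is a homotopy equivalence: the homotopy pushout automatically yields the $f^{-1}(\epsilon)$-side but the symmetric conclusion for $\partial T$ requires additional input, either the geometric \emph{graph-like} positioning of $T$ (which in turn calls for coordinating $\alpha$ and $f$) or the duality-and-Whitehead route with its non-trivial $\pi_1$ bookkeeping.
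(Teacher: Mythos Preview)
Your pushout argument cleanly gives the $f^{-1}(\epsilon)$ side, and your reduction of the $\partial T$ side to ``$T\setminus X_0\hookrightarrow B_\epsilon(f)\setminus X_0$ is a homotopy equivalence'' is legitimate (it is really 2-out-of-3 for the two factorizations of $\partial T\hookrightarrow B_\epsilon(f)\setminus X_0$, not a gluing lemma). The genuine gap is in establishing that last inclusion. Your route~1 does not work as stated: shrinking $T$ along the $\alpha$-flow only replaces one $\alpha$-sublevel by a smaller one, and there is no reason any $\alpha$-level set should be a section of the $f$-flow, since $\alpha$ and $f$ are completely unrelated away from $X_0$. The graph-like condition you want is exactly transversality of $\partial T$ to $\nabla f$ together with bijectivity of the projection, and nothing in the construction forces this. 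Your route~2 (Lefschetz duality with local coefficients, van~Kampen, Whitehead) would go through, but it is heavy machinery for what turns out to be an elementary statement.

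The paper avoids all of this with a sandwiching trick that you are missing. One first checks that the $f$-sublevels $B_\delta(f)$ are connected and form a fundamental system of neighborhoods of $X_0$ (both are short arguments using that $f$ has no interior critical values). Then one interleaves the two families: choose $0<\epsilon_3<\epsilon_2<\epsilon_1<\epsilon$ with
\[
B_{\epsilon_3}(\alpha)\subset\Int B_{\epsilon_2}(f)\subset B_{\epsilon_2}(f)\subset\Int B_{\epsilon_1}(\alpha)\subset B_{\epsilon_1}(\alpha)\subset\Int B_\epsilon(f),
\]
and set $T=B_{\epsilon_3}(\alpha)$. The deformation retract of $W$ onto $\partial T$ is now produced by two honest gradient flows in succession: first flow by $-\nabla f$ to push $W$ onto $W'=B_{\epsilon_2}(f)\setminus\Int T$ (this stays in $W$ because $T\subset\{f<\epsilon_2\}$); then, since $W'\subset\alpha^{-1}([\epsilon_3,\epsilon_1])\subset W$, flow by $-\nabla\alpha$ to push onto $\alpha^{-1}(\epsilon_3)=\partial T$. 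Swapping the roles of $\alpha$ and $f$ gives the retraction onto $f^{-1}(\epsilon)$ by the same mechanism. The argument is completely symmetric and uses nothing beyond the product structure of a gradient annulus; no pushouts, no duality, no Whitehead.

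Finally, your treatment of the ``in particular'' clause is more elaborate than needed. Since $f$ has no critical values in $(0,1)$, the flow of $-\nabla f$ already retracts $M\setminus f^{-1}(1)$ onto a single $B_\epsilon(f)$; there is no need to take $\epsilon_n\nearrow 1$ and splice.
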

\begin{proof}
Without loss of generality we assume that $X_0$ is connected; the non-connected case can be treated analogously, working separately on every connected component.

We observe first that the sublevels $\subl{f}{\epsilon}$ are also connected. In fact, suppose by contradiction that there exists $0<a<1$ such that $\subl{f}{a}$ is not connected. Denote by $C$ a connected component of $\subl{f}{a}$ that does not contain $X_0$. Let
\begin{equation}
b = \min_{x\in C} f(x)
\end{equation}
and fix $\bar{x}\in C$ such that $f(\bar{x}) = b$. This is a critical point for $f$ that is not in $X_0$ or $X_1$, so we get the contradiction.

Moreover the family of sublevels $\{ \subl{f}{\epsilon} \}_{\epsilon>0}$ is a fundamental system of neighborhoods. In fact let $V$ be a closed tubular neighborhood of $X_0$ and take
\begin{equation}
s = \min_{x\in \partial V} f(x).
\end{equation}
If we assume that $ \subl{f}{\frac{s}{2}} \nsubseteq \Int V$, then we can separate it as
\begin{equation}
\left(  \subl{f}{\frac{s}{2}}  \cap \Int V \right) \sqcup \left(  \subl{f}{\frac{s}{2}}  \cap (M\setminus V) \right),
\end{equation}
which leads to a contradiction, since $\subl{f}{\frac{s}{2}}$ is connected.

Let $\alpha\: U\to [0,1]$ be given by Lemma \ref{lemma:inter} and consider the family $\{B_{\epsilon}(\alpha) \}_{0<\epsilon\leq 1}$ of smooth regular neighborhoods of $X_0$ (recall that $U=B_1(\alpha)$).
Let $0<\epsilon_3<\epsilon_2<\epsilon_1<\epsilon$ be such that 
\begin{equation}
 B_{\epsilon_3}(\alpha)\subset \Int(B_{\epsilon_2}(f))\subset B_{\epsilon_2}(f)\subset \Int(B_{\epsilon_1}(\alpha))\subset B_{\epsilon_1}(\alpha)\subset \Int(B_\epsilon(f)).
 \end{equation}
 We set $T=B_{\epsilon_3}(\alpha)$ and fix a Riemannian metric on $M$, so that we can consider the gradients of $f$ and $\alpha$ (and their flows). We can write $W=W'\cup f^{-1}([\epsilon_2, \epsilon])$ with $W'=B_{\epsilon_2}(f)\setminus \Int(T).$  Following the flow of $-\nabla f$ one first deformation retracts $W$ onto $W'$. Since $W'\subset \alpha^{-1}([\epsilon_3, \epsilon_1])\subset W$, we can restrict  to $W'$ the deformation retraction of $\alpha^{-1}([\epsilon_3, \epsilon_1])$ onto $\partial T=\alpha^{-1}(\epsilon_3)$, given by the flow of $-\nabla \alpha$. This gives a deformation retraction of $W$ onto $\partial T$. 
 A similar reasoning, exchanging the roles of $\alpha$ and $f$ and using their gradients, yields a deformation retraction of $W$ onto $f^{-1}(\epsilon)$. 
This proves the first part of the lemma. 

The second part of the lemma follows easily: $M\setminus f^{-1}(1)$ deformation retracts to $B_\epsilon(f)$ using the flow of $-\nabla f$; $B_\epsilon(f)=W\cup T$ deformation retracts to $T$ (by the previous part) and $T$ deformation retracts to $X_0$, since it is a regular neighborhood.
\end{proof}

\begin{remark}\label{remark:homology} Suppose that $M$ is a closed oriented $n$-manifold and let $f \: M \to [0,1]$ be a Reeb function having as extrema two simplicial subcomplexes $X_0, X_1 \subset M$. There are isomorphisms $H_i(M,X_1) \cong H_i(M, M-X_0)\cong H^{n-i}(X_0)$, for all $i \geq 0$, the former being induced by inclusion and using Lemma \ref{lemma:h}, while the latter is a well-known duality (see for example \cite[Proposition 3.46]{hatcher:at}). Then, the long exact sequence of the pair $(M, X_1)$ yields the following long exact sequence
\[\cdots \to H_i(X_1) \to H_i(M) \to H^{n-i}(X_0) \to H_{i-1}(X_1) \to \cdots.\]
In the non-orientable case an analogous long exact sequence holds with $\Z_2$ coefficients.

This means that the image of the inclusion $H_{i} \left( X_1 \right) \to H_i \left( M \right)$ lies in the orthogonal complement of the restriction $H^{n-i} \left( M \right) \to H^{n-i}\left(  X_0 \right)$. This gives a necessary condition for two subcomplexes of $M$ to be the extrema of a Reeb function.

In the particular case of $M= S^n$ we can notice something more. Alexander duality in fact states that $\widetilde{H}_i(S^n \setminus X_0) \cong \widetilde{H}^{n-i-1}(X_0)$ and therefore
\begin{equation}
\widetilde{H}_i(X_1) \cong \widetilde{H}^{n-i-1}(X_0).
\end{equation}
This implies for example that the sum of the Betti numbers of $X_0$ must coincide with the sum of the Betti numbers of $X_1$.
\end{remark}

\section{Dimension 3}
We prove now Theorem \ref{thm_g_infty} from Section \ref{sec:intro}, whose statement we recall here for the reader's convenience.
\begin{theorem*}
Let $M$ be a smooth closed connected $3$-manifold. Then the following conditions are equivalent:
\begin{enumerate}
\item there exists a Reeb function $f \colon M \to [0,1]$  having connected 1-dimensional subcomplexes of the same Euler characteristic $\chi$ as extrema, for some $\chi\leq 1$;
\item the Heegaard genus of $M$ is at most $1-\chi$ in the orientable case and $2-2\chi$ in the non-orientable case.
\end{enumerate} 
\end{theorem*}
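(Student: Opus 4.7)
The plan is to convert the data of a Reeb function (its sublevels and superlevels near the extremal values) into the data of a Heegaard-type decomposition of $M$, and conversely to reconstruct a Reeb function from such a decomposition by applying Lemma \ref{lemma:inter} on each side. A connected graph of Euler characteristic $\chi$ has first Betti number $1-\chi$, and its smooth regular neighborhood in a $3$-manifold admits a handle decomposition with one $0$-handle and $1-\chi$ $1$-handles after cancellation; this is the geometric origin of the bounds $1-\chi$ and $2-2\chi$.

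For the direction $(1)\Rightarrow(2)$, let $f\:M\to[0,1]$ be a Reeb function with extrema two connected $1$-complexes $X_0,X_1$ of Euler characteristic $\chi$. Fix $\epsilon>0$ small and take $T_i$ to be the smooth regular neighborhood of $X_i$ produced by Lemma \ref{lemma:inter}, arranged to sit inside $\subl{f}{\epsilon}$ resp.\ $\supl{f}{1-\epsilon}$. Lemma \ref{lemma:h}, applied to $f$ and to $1-f$, identifies $\subl{f}{\epsilon}\setminus\Int(T_0)$ and $\supl{f}{1-\epsilon}\setminus\Int(T_1)$ with h-cobordisms between closed surfaces, while the middle region $f^{-1}([\epsilon,1-\epsilon])$ is a trivial product cobordism via its gradient flow. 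Since in dimension three any h-cobordism between closed surfaces is a smooth product, we obtain a decomposition $M=T_0\cup_{\Sigma}T_1$ along a common surface $\Sigma$. Each $T_i$, being the regular neighborhood of a connected graph of first Betti number $1-\chi$, is a handlebody (orientable or not) with one $0$-handle and $1-\chi$ $1$-handles. If $M$ is orientable, both $T_i$ are orientable handlebodies of genus $1-\chi$. If $M$ is non-orientable, both $T_i$ must be non-orientable (otherwise the gluing of two orientable handlebodies, whose orientations can always be matched, would make $M$ orientable); then from $\chi(\Sigma)=2\chi(T_i)=2\chi$ one reads that $\Sigma$ is non-orientable of genus $2-2\chi$.

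For the direction $(2)\Rightarrow(1)$, start from a Heegaard decomposition $M=H_0\cup_{\Sigma}H_1$ of the assumed genus. Stabilize to reach handlebodies with exactly $1-\chi$ $1$-handles on each side: in the orientable case this is standard genus stabilization; in the non-orientable case one attaches an orientable $1$-handle to both sides, which raises the non-orientable genus of $\Sigma$ by two, consistent with $2-2\chi$ being even. Choose connected graph spines $X_0\subset\Int H_0$ and $X_1\subset\Int H_1$; by construction $\chi(X_i)=\chi$. Apply Lemma \ref{lemma:inter} with ambient manifold $M$, subcomplex $X_i$, and compact regular neighborhood $U=H_i$, obtaining smooth functions $\alpha_i\:H_i\to[0,1]$ with $\alpha_i^{-1}(0)=X_i$, $\alpha_i^{-1}(1)=\partial H_i$, no critical points in $(0,1]$, and $\grad\alpha_i$ outward-pointing along $\partial H_i$. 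Rescale to $f_0=\tfrac{1}{2}\alpha_0\:H_0\to[0,\tfrac{1}{2}]$ and $f_1=1-\tfrac{1}{2}\alpha_1\:H_1\to[\tfrac{1}{2},1]$, and glue along $\Sigma$ using Lemma \ref{remark:glue}; the gradient hypotheses of that lemma are met (outward for $f_0$, inward for $f_1$). The outcome is a smooth Reeb function $f\:M\to[0,1]$ with $f^{-1}(0)=X_0$ and $f^{-1}(1)=X_1$, both connected $1$-complexes of Euler characteristic $\chi$.

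The main obstacle is the bookkeeping in the non-orientable case: one must fix the correct notion of Heegaard genus (the non-orientable genus of the splitting surface, which is automatically even when both sides consist only of $0$- and $1$-handles), verify that a regular neighborhood of a connected graph carrying at least one orientation-reversing loop is a non-orientable handlebody with the expected Euler characteristic, and check that the stabilization procedure indeed reaches every admissible value. The other key technical ingredient is the three-dimensional fact that h-cobordisms between closed surfaces are products, which upgrades the h-cobordism supplied by Lemma \ref{lemma:h} into an honest diffeomorphism between a sublevel and the smooth regular neighborhood of the corresponding extremum.
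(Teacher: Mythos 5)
Your proposal is correct and follows essentially the same route as the paper: for $(1)\Rightarrow(2)$ it uses Lemma \ref{lemma:h} plus the fact that $3$-dimensional h-cobordisms between closed surfaces are products (the paper cites \cite[Theorem 10.2]{Hem76}) to exhibit the sublevel and superlevel as genus-$g$ handlebodies, and for $(2)\Rightarrow(1)$ it takes graph spines of the two Heegaard handlebodies and glues the functions from Lemma \ref{lemma:inter} via Lemma \ref{remark:glue}, exactly as in the proof of Theorem \ref{thm:existence}. The extra details you supply (stabilization to reach genus exactly $1-\chi$ resp. $2-2\chi$, and the orientability bookkeeping in the non-orientable case) are points the paper leaves implicit, and they are handled correctly.
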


\begin{proof}
$(1)\Rightarrow (2)$. Given $\epsilon>0$, by Lemma \ref{lemma:h} there exists a regular neighborhood $T$ of $X_0$ such that  $T\subset \Int(B_\epsilon(f))$ and  $W=B_{\epsilon}(f)\setminus \Int(T)$ is an h-cobordism. By \cite[Theorem 10.2]{Hem76}, it follows that this cobordism is a product, that is $W\cong \partial T\times [0,1]$. Since $T$ is a genus-$g$ handlebody,  with $g=1-\chi$ in the orientable case and $g=2-2\chi$ in the non-orientable case, it follows that $B_\epsilon(f)=T\cup W$ is a genus-$g$ handlebody. Similarly, the superlevel $A_{\epsilon}(f)$ is a genus-$g$ handlebody and the Heegaard genus of $M=B_{\epsilon}(f)\cup_{\partial}A_{\epsilon}(f)$ is at most $g$.

$(2)\Rightarrow (1)$. Consider a genus $g$ Heegaard splitting of $M=P\cup_\partial Q$, where $P$ and $Q$ are $3$-dimensional handlebodies of genus $g$, with $g=1-\chi$ in the orientable case and $g=2-2\chi$ in the non-orientable case. Then there are connected graphs $X_0\subset P$ and $X_1\subset Q$ with Euler characteristic $\chi$, such that $P$ is a regular neighborhood of $X_0$ and $Q$ is a regular neighborhood of $X_1$. The proof of the claim follows immediately from Lemma \ref{lemma:inter} and the uniqueness of regular neihgborhoods, as in the proof of Theorem~\ref{thm:existence}.
\end{proof}
\begin{remark}In the proof of the first implication of the previous theorem we used \cite[Theorem 10.2]{Hem76}. The application of this result does not need the Poincar\'e Conjecture because the sublevel $B_{\epsilon}(f)$ can be embedded into a handlebody (hence in $\R^3$) following the flow of $-\nabla f$. In particular $B_\epsilon(f)$ does not contain fake $3$-cells.
\end{remark}

\begin{remark}\label{remark:02} If $M$ is a closed, orientable $3$-manifold and $f\: M\to [0,1]$ is a Reeb function such that $X_0\cong X_1\cong S^k$ are smoothly embedded $k$-spheres, then one can prove that $M\cong S^3\sqcup S^3$, if $k=0$,  and $M\cong S^1\times S^2$ if $k=2.$ The case $k=0$ follows from Milnor's version of Reeb's Sphere Theorem. The case $k=2$ is also elementary, following the same lines as Proposition \ref{propo:sei}, and using the fact that the h-cobordism holds true also in dimension $n=3$, being equivalent to the Poincar\'e Conjecture.\end{remark}

\section{Higher dimensions}\label{sec_>5}

Consider a manifold $M$ of dimension $n$ and a Reeb function with two smoothly embedded copies of $S^k$, $k<n$, as extrema. If we restrict to $n\geq 6$, we may use cobordism techniques and prove the following statements.

\begin{proposition}\label{propo:sei}
Let $M$ be a smooth closed connected orientable manifold of dimension $n\geq 6$. Suppose that there exists a Reeb function $f \colon M \to [0,1]$ whose extrema are finite simplicial subcomplexes with fundamental group having trivial Whitehead group. Then $B_\epsilon(f)$ and $A_\epsilon(f)$ are smooth regular neighborhoods of $X_0$ and $X_1$, respectively.
\end{proposition}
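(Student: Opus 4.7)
The plan is to reduce the statement to the $s$-cobordism theorem, which is available in dimension $n \geq 6$. Given $\epsilon \in (0,1)$, Lemma~\ref{lemma:h} already produces a smooth regular neighborhood $T$ of $X_0$ with $T \subset \Int(B_\epsilon(f))$ and exhibits $W = B_\epsilon(f) \setminus \Int(T)$ as an h-cobordism between $\partial T$ and the regular level $f^{-1}(\epsilon)$. The desired conclusion, namely that $B_\epsilon(f)$ is a smooth regular neighborhood of $X_0$, amounts to producing a diffeomorphism $B_\epsilon(f) \cong T$, and this follows at once if $W$ can be shown to be a product cobordism.

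To trivialize $W$, I would invoke the $s$-cobordism theorem. Since $\dim W = n \geq 6$ and the inclusion $\partial T \hookrightarrow W$ is a homotopy equivalence, $W$ is diffeomorphic to $\partial T \times [0,1]$ as soon as the Whitehead torsion $\tau(W, \partial T) \in \wh(\pi_1(\partial T))$ vanishes. Because $T$ deformation retracts onto $X_0$, one has $\pi_1(T) \cong \pi_1(X_0)$; the next task is to identify $\pi_1(\partial T)$ with $\pi_1(X_0)$ so that the torsion is housed in the Whitehead group assumed to be trivial. Once this is granted, the hypothesis $\wh(\pi_1(X_0)) = 0$ forces $\tau(W,\partial T) = 0$, and the $s$-cobordism theorem yields $W \cong \partial T \times [0,1]$. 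Gluing this collar onto $T$ and absorbing it via the collar neighborhood theorem gives $B_\epsilon(f) = T \cup_{\partial T} W \cong T$, so $B_\epsilon(f)$ is a smooth regular neighborhood of $X_0$. The analogous statement for $A_\epsilon(f)$ is obtained by applying the same argument to the Reeb function $1 - f$, which has $X_1$ as its set of minima and still satisfies the Whitehead-group hypothesis.

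The main obstacle I foresee is the fundamental-group bookkeeping needed to match $\wh(\pi_1(\partial T))$ with $\wh(\pi_1(X_0))$. When $\dim X_0 \leq n - 3$, general position guarantees that the inclusion $\partial T \hookrightarrow T$ induces a $\pi_1$-isomorphism, so the Whitehead group housing the torsion is precisely $\wh(\pi_1(X_0)) = 0$ and the argument closes cleanly. In the general simplicial case one must instead push the torsion forward along the retraction-induced map $\pi_1(\partial T) \to \pi_1(T) = \pi_1(X_0)$, or refine the choice of regular neighborhood so that the $\pi_1$-identification is available. Once this matching of Whitehead groups is in place, the remainder of the proof is a direct application of the $s$-cobordism theorem together with the uniqueness of smooth collars.
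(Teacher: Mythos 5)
Your argument is essentially the paper's own proof: the paper handles Proposition \ref{propo:sei} in one line, citing Lemma \ref{lemma:h} for the h-cobordism $W$ between $\partial T$ and $f^{-1}(\epsilon)$ and the s-cobordism theorem to trivialize it, exactly as you do (compare the proof of Theorem \ref{thm_g_infty}), and then absorbing the product collar to get $B_\epsilon(f)\cong T$. The $\pi_1$-bookkeeping you flag --- the torsion lives in $\wh(\pi_1(\partial T))\cong\wh(\pi_1(W))$, not a priori in $\wh(\pi_1(X_0))$ --- is a genuine subtlety that the paper's proof also leaves implicit; just note that your fallback of ``pushing the torsion forward'' along $\pi_1(\partial T)\to\pi_1(X_0)$ cannot by itself force the torsion to vanish (the image is trivially zero in a trivial group), so an actual identification such as the codimension-$\geq 3$ general-position argument is what must be invoked.
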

\begin{proof}
This follows from Lemma \ref{lemma:h} together with the s-cobordism Theorem \cite{milnor:whitehead} applied to smooth regular neighborhoods of $X_0$ and $X_1$, respectively (see also the proof of Theorem \ref{thm_g_infty}).
\end{proof}

From this result, we can then deduce the following corollary.

\begin{corollary}\label{cor:main_higher_dimension}
Let $M$ be a smooth closed connected orientable manifold of dimension $n\geq 6$ and let $1 \leq k < n$. Then the following conditions are equivalent:
\begin{enumerate}
    \item there exists a Reeb function $f\: M\to [0,1]$ with two smoothly embedded $k$-spheres as extrema, with trivial normal bundle;
    \item $M$ is obtained by gluing two copies of $S^k\times D^{n-k}$ along their boundaries with a diffeomorphism of $S^k\times S^{n-k-1}=\partial (S^k\times D^{n-k})$.
\end{enumerate}
\end{corollary}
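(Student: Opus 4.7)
For the direction $(1) \Rightarrow (2)$, the plan is to invoke Proposition \ref{propo:sei} directly. One first needs to check that $\pi_1(S^k)$ has trivial Whitehead group: for $k \geq 2$ the group $\pi_1(S^k)$ is trivial, while for $k=1$ we have $\pi_1(S^1)=\Z$ and a classical result gives $\wh(\Z)=0$. Thus Proposition \ref{propo:sei} applies and yields that $B_\epsilon(f)$ and $A_\epsilon(f)$ are smooth regular neighborhoods of the smoothly embedded spheres $X_0\cong S^k$ and $X_1\cong S^k$, respectively. By the tubular neighborhood theorem together with the uniqueness of smooth regular neighborhoods of a smooth submanifold, and using that the normal bundle of each sphere is trivial, we conclude $B_\epsilon(f)\cong S^k\times D^{n-k}\cong A_\epsilon(f)$. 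Since $M=B_\epsilon(f)\cup_{\Bd} A_\epsilon(f)$, the manifold $M$ is obtained by gluing two copies of $S^k\times D^{n-k}$ along their common boundary $S^k\times S^{n-k-1}$ via a diffeomorphism, proving $(2)$.

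For the direction $(2)\Rightarrow (1)$, I would mimic the construction used in the proof of Theorem \ref{thm:existence}. Write $M=P\cup_\phi Q$ with $P,Q\cong S^k\times D^{n-k}$ and $\phi$ a diffeomorphism of the common boundary. Take $X_0 = S^k\times\{0\}\subset P$ and $X_1=S^k\times\{0\}\subset Q$; both are smoothly embedded $k$-spheres with trivial normal bundle. Apply Lemma \ref{lemma:inter} to obtain a smooth function $\alpha\: P\to [0,1]$ with $\alpha^{-1}(0)=X_0$, $\alpha^{-1}(1)=\Bd P$, and no critical points in $(0,1]$; repeat on $Q$ to obtain $\alpha'\: Q\to[0,1]$ with the analogous properties, and set $\beta=1-\alpha'$. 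Finally glue $\alpha$ and $\beta$ along the common boundary using Lemma \ref{remark:glue}: the resulting $C^\infty$ function $f\: M\to[0,1]$ is a Reeb function with extrema $X_0$ and $X_1$, as required.

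The only nontrivial point I expect is the Whitehead-group verification for $k=1$, which is the hypothesis needed to apply Proposition \ref{propo:sei} and hence to identify $B_\epsilon(f)$ with an actual tubular neighborhood of $S^k$. Once that is settled, both implications reduce to a combination of the results already in the paper: Proposition \ref{propo:sei} with uniqueness of smooth regular neighborhoods, and the gluing construction used to prove Theorem \ref{thm:existence}.
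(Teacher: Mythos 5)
Your proof is correct and follows essentially the route the paper intends: the forward direction combines Proposition \ref{propo:sei} (whose Whitehead-group hypothesis is satisfied since $\pi_1(S^k)$ is trivial for $k\geq 2$ and $\wh(\Z)=0$ for $k=1$) with uniqueness of smooth regular/tubular neighborhoods and triviality of the normal bundle, while the converse repeats the gluing construction of Lemma \ref{lemma:inter} and Lemma \ref{remark:glue} exactly as in the proofs of Theorems \ref{thm:existence} and \ref{thm_g_infty}. No substantive gap.
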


Notice in particular that for $k=n-1$, $M$ is diffeomorphic to $S^{n-1}\times S^1$.

\section{Concluding remarks and open questions}

\begin{remark} The s-cobordism theorem still holds true in dimension $5$ topologically, so Proposition \ref{propo:sei} and Corollary \ref{cor:main_higher_dimension} can be extended to dimension $5$ up to homeomorphisms instead of diffeomorphisms. However $5$-dimensional s-cobordism theorem fails in the smooth category. Indeed, any two homeomorphic but not diffeomorphic closed smooth simply connected 4-manifolds are known to be smoothly h-cobordant).

In dimension $4$, the h-cobordism theorem holds topologically in the simply connected case, while in the smooth category it is equivalent to the 4-dimensional Smooth Poincaré Conjecture, which is still open. It would be intriguing to investigate an analogous to Corollary \ref{cor:main_higher_dimension} in dimension $4$, in the smooth category.
\end{remark}

\begin{remark}\label{product/rmk}
If $f\: M \to [0,1]$ is a Reeb function with extrema $X_0, X_1 \subset M$, the flow of $\grad f$, with respect to a fixed Riemannian metric, yields a diffeomorphism \[M \setminus (X_0\cup X_1) \cong f^{-1}(\epsilon) \times \R,\] for any $\epsilon \in (0,1)$, and $f^{-1}(\epsilon)$ is a smooth closed hypersurface in $M$. In particular, $M \setminus (X_0\cup X_1)$ has finitely many ends, because the connected components of $X_0\cup X_1$ can be identified with the end points of $M \setminus (X_0\cup X_1)$, which, in turn, correspond to the connected components of $f^{-1}(\epsilon)$. Moreover, if $M$ is connected and $X_0$ and $X_1$ have topological dimension at most $\dim M -2$, then they do not disconnect $M$ and hence they must be connected. Therefore, certain subspaces of $M$ (for example, a Cantor set) cannot occur as extrema of a Reeb function.
\end{remark}

So, the following question arises naturally.
\begin{question}
Which subspaces of a closed manifold can be realized as one of the extrema of a certain Reeb function?
\end{question}

Another question is motivated by our constructions in Lemma \ref{lemma:inter} and in Theorem \ref{thm:existence}, which produce Reeb functions admitting a collapsing pseudo-gradient vector field $v$, that is, the flow lines of $v$ (resp. $-v$) give a collapsing of $M\setminus X_0$ to $X_1$ (resp. of $M\setminus X_1$ to $X_0$).

\begin{question}
Does there exist a closed manifold $M$ with a Reeb function $f\: M \to [0,1]$ with no collapsing pseudo-gradient vector field?
\end{question}


\end{document}